\newtheorem{theorem}{Theorem}
\newtheorem{proposition}{Proposition}
\newtheorem{lemma}{Lemma}
\theoremstyle{definition}
\begin{document}

\title{A Note on the Paper ``The unique solution of the absolute value equations"}

\author{
Shubham Kumar\footnote{
Department of Mathematics, PDPM-Indian Institute of Information Technology, Design and Manufacturing Jabalpur, M.P. India, e-mail: \texttt{shub.srma@gmail.com}}
\and Deepmala\footnote{
Department of Mathematics, Faculty of Natural Sciences, PDPM-Indian Institute of Information Technology, Design and Manufacturing Jabalpur, M.P. India, e-mail: \texttt{dmrai23@gmail.com}}
}

\date{\today}
\maketitle

\begin{abstract}
In this note, we give the possible revised version of the unique solvability conditions for the two incorrect results that appeared in the published paper by Wu et al. (Appl Math Lett 76:195-200, 2018).
\end{abstract}

\textbf{Keywords: }\textit {Absolute value equations;  Unique solution; Sufficient condition}
\bigskip

\section{Introduction}
We consider the following absolute value equations (AVE)
\begin{equation} \label{Equ AVE}
	Ax -  \vert x \vert = b,
\end{equation}
where $A \in \mathbb{R}^{n\times n},$ $b \in \mathbb{R}^{n}$ are given and $x \in \mathbb{R}^{n}$ is the unknown.



The Absolute Value Equation (AVE) was initially explored by Rohn in \cite{rohn2004theorem}. Subsequently, numerous scholars have conducted in-depth investigations; Rohn \cite{rohn2004theorem} gave the alternative theorem for the AVE and showed that AVE is equivalent to linear complementarity problems under some conditions. Mangasarian \cite{mangasarian2006absolute} gave the existence and nonexistence conditions for the AVE, and in \cite{mangasarian2007absolute} showed that solving the AVE is an NP-hard problem. Hladik \cite{hladik2023properties} provided the topological properties for the solution set of AVE, and further researchers obtained the different conditions for the unique solvability of the AVE ( see \cite{hladik2023some,wu2021unique,wu2018unique} and references therein). Wu et al. \cite{wu2018unique} showed that if the conditions $\frac{1}{2}$$|| (A+I)^{-1} ||_2 < 1$ or $\frac{1}{2}$$|| (A-I)^{-1} ||_2 < 1$ are hold then AVE (\ref{Equ AVE}) has exactly one solution, but these conditions are incorrect as showed by Hladik \cite{hladik2023some}. In this note, based on some results from the literature, we will provide the alternative to the above incorrect conditions.

\paragraph{Notation.} The maximum and minimum singular values of the matrix A are represented by $\sigma_{max}(A)$ and $\sigma_{min}(A)$, respectively.  $|| . ||_2$ denotes the spectral norm for matrices. $I$ denotes the identity matrix. 
\section{The Unique Solvability of the AVE}


\begin{theorem} \cite{wu2018unique} \label{Thm 1}
	The AVE (\ref{Equ AVE}) has exactly one solution for each b, if any of the conditions listed below are met:\\ 
	(i)  $\frac{1}{2}$$|| (A-I)^{-1} ||_2 < 1$.\\
	(ii) $\frac{1}{2}$$|| (A+I)^{-1} ||_2 < 1$.
\end{theorem}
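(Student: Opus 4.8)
The plan is to prove unique solvability by recasting \eqref{Equ AVE} as a fixed-point equation and applying the Banach contraction principle on $\R^n$ with the Euclidean norm, which is a complete metric space; a unique fixed point then yields, for every $b$, exactly one solution of \eqref{Equ AVE}. I would treat conditions (i) and (ii) in parallel, since each arises from a different but equivalent rearrangement of \eqref{Equ AVE} that isolates $x$ through $(A-I)^{-1}$ and $(A+I)^{-1}$ respectively. Observe first that condition (i), $\tfrac12\|(A-I)^{-1}\|_2<1$, forces $(A-I)$ to be nonsingular (and likewise $(A+I)$ under (ii)), so the inverses appearing below are well defined.

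To build the operator for (i), I would use the elementary componentwise identities $x+|x|=2x^{+}$ and $|x|-x=2x^{-}$, where $x^{+}=\max\{x,0\}$ and $x^{-}=\max\{-x,0\}$. Rewriting \eqref{Equ AVE} as $(A-I)x = Ax-x = b+(|x|-x) = b+2x^{-}$ gives the fixed-point formulation
\begin{equation}\label{fp-minus}
x = (A-I)^{-1}\bigl(b + 2x^{-}\bigr) =: T_{-}(x).
\end{equation}
Symmetrically, for (ii) I would write $(A+I)x = Ax+x = b+(|x|+x) = b+2x^{+}$, yielding $x=(A+I)^{-1}\bigl(b+2x^{+}\bigr)=:T_{+}(x)$. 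In each case a point $x$ solves \eqref{Equ AVE} if and only if it is a fixed point of the corresponding operator, so it remains to show that $T_{-}$ (resp.\ $T_{+}$) is a contraction.

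The decisive step is the contraction estimate, and this is where I expect the main difficulty to lie. The maps $x\mapsto x^{\pm}$ are nonexpansive in the Euclidean norm, since $x^{\pm}=\tfrac12(|x|\pm x)$ and $x\mapsto|x|$ is $1$-Lipschitz; carrying this through \eqref{fp-minus} gives $\|T_{-}(x)-T_{-}(y)\|_2 \le 2\,\|(A-I)^{-1}\|_2\,\|x^{-}-y^{-}\|_2$. The crux is therefore to bound $\|x^{-}-y^{-}\|_2$ sharply against $\|x-y\|_2$ and to track the resulting numerical factor precisely, because the contraction constant produced by this bound is exactly what must be forced below $1$ to invoke Banach's theorem. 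Pinning down this constant correctly --- rather than over- or under-counting the factor coming jointly from the $2x^{-}$ term and from the nonexpansiveness of $x\mapsto x^{-}$ --- is the delicate point on which the stated threshold $\tfrac12\|(A-I)^{-1}\|_2<1$ rests, and it is the step I would scrutinize most carefully before declaring the proof complete. Once a contraction constant strictly below $1$ is secured, existence and uniqueness of the fixed point, and hence of the AVE solution for each $b$, follow immediately; the argument for (ii) via $T_{+}$ is identical with $x^{+}$ in place of $x^{-}$.
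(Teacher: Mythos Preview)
Your approach cannot succeed because the statement you are trying to prove is \emph{false}, and the paper does not attempt to prove it either: Theorem~\ref{Thm 1} is quoted from \cite{wu2018unique} precisely in order to record that it is wrong. Immediately after the statement the paper recalls Hlad\'ik's counterexample: take $n=1$ and $A=0$. Then $(A\pm I)^{-1}=\pm 1$, so $\tfrac12\|(A\pm I)^{-1}\|_2=\tfrac12<1$ and both conditions hold, yet the AVE reads $-|x|=b$, which has two solutions for $b=-1$ and none for $b=1$.

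Your own computation actually shows where the attempt breaks. From $T_{-}(x)-T_{-}(y)=(A-I)^{-1}\,2(x^{-}-y^{-})$ and the $1$-Lipschitzness of $x\mapsto x^{-}$ you get
\[
\|T_{-}(x)-T_{-}(y)\|_2 \le 2\,\|(A-I)^{-1}\|_2\,\|x-y\|_2,
\]
so the Lipschitz constant produced by the Banach argument is $2\|(A-I)^{-1}\|_2$, not $\tfrac12\|(A-I)^{-1}\|_2$. There is no hidden extra factor of $\tfrac12$ to be found: the map $x\mapsto x^{-}$ is genuinely $1$-Lipschitz and not $\tfrac12$-Lipschitz (take $x=1$, $y=-1$ in $\R$). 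Hence the contraction hypothesis you actually obtain is $2\|(A-I)^{-1}\|_2<1$, i.e.\ $\|(A-I)^{-1}\|_2<\tfrac12$, which is exactly the corrected condition the paper establishes in Propositions~\ref{Thm 4} and~\ref{Thm 5} (and analogously Proposition~\ref{Thm 3} for the $(A+I)$ case). Your caution about ``pinning down this constant correctly'' was well placed; the stated threshold $\tfrac12\|(A-I)^{-1}\|_2<1$ simply cannot be reached by this or any argument.
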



Hladík et al. \cite{hladik2023some} show that the conditions of the Theorem \ref{Thm 1} are incorrect. By taking $A =0 \in \mathbb{R},$ all the conditions of the Theorem \ref{Thm 1} are satisfied, but AVE $-|x| = b$ does not have a unique solution for each $b \in \mathbb{R}$. 


The possible revised versions of the Theorem \ref{Thm 1} are given in the following results.
\begin{proposition} \label{Thm 3}
	If $2$$|| (A+I)^{-1} ||_{2} < 1$  then the AVE (\ref{Equ AVE}) has exactly one solution for each b.
\end{proposition}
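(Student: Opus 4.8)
The plan is to reduce the unique solvability of the AVE to a fixed‑point / perturbation argument based on the matrix $A+I$. Writing $|x| = \tfrac12\bigl((x+|x|)+(|x|-x)\bigr)$ is one route, but the cleaner one here is to note that $Ax-|x|=b$ can be rewritten by splitting $A = (A+I) - I$, giving $(A+I)x = b + x + |x|$. Since $\|(A+I)^{-1}\|_2 < \tfrac12$ forces $A+I$ to be nonsingular, this is equivalent to the fixed‑point equation $x = (A+I)^{-1}\bigl(b + x + |x|\bigr)$. I would then define $F(x) = (A+I)^{-1}(b + x + |x|)$ and aim to show $F$ is a contraction on $\mathbb{R}^n$ with respect to the Euclidean norm, which by the Banach fixed‑point theorem yields exactly one solution for every $b$.

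The key estimate is this: for any $x,y\in\mathbb{R}^n$,
\[
\|F(x)-F(y)\|_2 \le \|(A+I)^{-1}\|_2\,\bigl\|(x-y) + (|x|-|y|)\bigr\|_2.
\]
The first step is the elementary inequality $\bigl\|(x-y)+(|x|-|y|)\bigr\|_2 \le \|x-y\|_2 + \bigl\||x|-|y|\bigr\|_2 \le 2\|x-y\|_2$, using that the map $x\mapsto |x|$ (componentwise absolute value) is $1$‑Lipschitz in the Euclidean norm, since $\bigl||x_i|-|y_i|\bigr|\le |x_i-y_i|$ coordinatewise. Combining, $\|F(x)-F(y)\|_2 \le 2\,\|(A+I)^{-1}\|_2\,\|x-y\|_2$, and the hypothesis $2\|(A+I)^{-1}\|_2 < 1$ makes the Lipschitz constant strictly less than $1$. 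Hence $F$ is a contraction and has a unique fixed point, i.e.\ the AVE has exactly one solution; the argument holds for each $b$ because $b$ only shifts $F$ by a constant and does not affect the contraction constant.

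Alternatively, and perhaps preferable for a short note, one can cite an existing general criterion: the AVE $Ax-|x|=b$ is uniquely solvable for every $b$ whenever $A-D$ is nonsingular for all diagonal $D$ with entries in $[-1,1]$, and a sufficient spectral condition for that is $\sigma_{\min}(A) > 1$; but since here we only control $(A+I)^{-1}$, I would instead invoke the known sufficient condition that uniqueness holds if $\|(A+I)^{-1}\|_2 \cdot (\text{something} \le 2) < 1$ — which is exactly what the contraction computation above reproduces. I would therefore present the self‑contained contraction proof rather than chase a citation, since it is three lines once the Lipschitz bound on $|\cdot|$ is in hand.

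The main obstacle is essentially bookkeeping rather than depth: one must be careful that the factor $2$ in the Lipschitz constant is tight and genuinely needed (it comes from $\|x-y\|_2 + \||x|-|y|\|_2 \le 2\|x-y\|_2$, not from any slack), so the hypothesis $2\|(A+I)^{-1}\|_2<1$ is exactly matched to the estimate and cannot be weakened by this method. A secondary point worth a sentence is confirming that $\|(A+I)^{-1}\|_2<\tfrac12<\infty$ indeed forces $A+I$ to be invertible (so that the rewriting $x=F(x)$ is legitimate), which is immediate since $\|(A+I)^{-1}\|_2$ being a finite real number presupposes the inverse exists — but it is cleaner to state the nonsingularity of $A+I$ explicitly as the first line of the proof.
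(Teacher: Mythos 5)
Your contraction argument is correct: the rewriting $(A+I)x = b + x + |x|$, the coordinatewise bound $\bigl||x_i|-|y_i|\bigr|\le|x_i-y_i|$ giving $\bigl\|(x-y)+(|x|-|y|)\bigr\|_2\le 2\|x-y\|_2$, and the Banach fixed-point theorem together yield a unique solution for every $b$, with Lipschitz constant $2\|(A+I)^{-1}\|_2<1$ exactly matching the hypothesis. However, your route is genuinely different from the paper's. The paper does not construct a fixed-point map at all: it cites the known sufficient condition that $\sigma_{\min}(A+I)>2$ implies unique solvability (Wu--Li), and then converts that condition into the stated one via the identity $\sigma_{\min}(X)\,\sigma_{\max}(X^{-1})=1$, so that $\sigma_{\min}(A+I)>2$ is equivalent to $\|(A+I)^{-1}\|_2<\tfrac12$. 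The paper's proof is thus a two-line reduction to a citation plus a singular-value identity, which is appropriate for a short corrective note; your proof is self-contained and in effect reconstructs the argument underlying the cited result, which makes the origin of the constant $2$ transparent and shows why it cannot be improved by this method. Both are valid; your version buys independence from the literature at the cost of length, while the paper's buys brevity at the cost of relying on an external theorem. Your closing aside about the criterion that $A-D$ be nonsingular for all diagonal $D$ with entries in $[-1,1]$ is also consistent with how the paper handles its companion result (Proposition \ref{Thm 4}), but it is not the mechanism used for this proposition.
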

\begin{proof}
	If $\sigma_{min}(A+I) > 2$ then the AVE (\ref{Equ AVE}) has exactly one solution \cite{wu2021unique}.
	By relation, $\sigma_{min}(X).\sigma_{max}(X^{-1}) = 1,$ condition $\sigma_{min}(A+I) > 2$ is equivalent to the condition $\sigma_{max}(A+I)^{-1} < \frac{1}{2}.$ This completes the proof.	
\end{proof}

\begin{lemma} \cite{zhang2011matrix} \label{Lemma 1}
	The inequality $\sigma_{min}(A) - \sigma_{max}(B)$ 	$\leq$ $\sigma_{min}(A+B)$ holds for the matrices  A, B $\in \mathbb{R}^{n \times n}$.
\end{lemma}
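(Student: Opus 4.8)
The plan is to prove Lemma~\ref{Lemma 1} by reducing the singular-value inequality to a statement about smallest and largest eigenvalues of symmetric positive semidefinite matrices, or equivalently to a min--max characterization of singular values. Recall that for any matrix $M \in \mathbb{R}^{n\times n}$, $\sigma_{\max}(M) = \max_{\|u\|_2 = 1}\|Mu\|_2$ and $\sigma_{\min}(M) = \min_{\|u\|_2 = 1}\|Mu\|_2$ (both maxima/minima being attained on the unit sphere by compactness). So the first step is to write $\sigma_{\min}(A+B) = \|(A+B)u^\ast\|_2$ for a unit vector $u^\ast$ achieving the minimum.

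Next I would apply the triangle inequality for the Euclidean norm in the reverse direction: $\|(A+B)u^\ast\|_2 \ge \|Au^\ast\|_2 - \|Bu^\ast\|_2$. From here, bound each term: $\|Au^\ast\|_2 \ge \sigma_{\min}(A)$ since $u^\ast$ is a unit vector and $\sigma_{\min}(A)$ is the minimum of $\|Au\|_2$ over all such vectors, and $\|Bu^\ast\|_2 \le \sigma_{\max}(B)$ similarly. Chaining these gives $\sigma_{\min}(A+B) = \|(A+B)u^\ast\|_2 \ge \sigma_{\min}(A) - \sigma_{\max}(B)$, which is exactly the claimed inequality. The argument is symmetric in which of $A$, $B$ plays which role, but the stated form is all we need.

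I do not expect a genuine obstacle here; the only mild subtlety is being careful about signs when $\sigma_{\min}(A) - \sigma_{\max}(B)$ is negative, in which case the inequality is trivially true because $\sigma_{\min}(A+B) \ge 0$, so one may assume $\sigma_{\min}(A) \ge \sigma_{\max}(B)$ without loss of generality if one wants the reverse triangle inequality in its cleanest form. An alternative, equally short route is via Weyl-type perturbation bounds for singular values: $\sigma_{\min}(A+B) \ge \sigma_{\min}(A) - \|B\|_2$ and $\|B\|_2 = \sigma_{\max}(B)$; this is essentially the same proof packaged as a known theorem, and since the lemma is cited to \cite{zhang2011matrix} one could simply invoke it, but the two-line variational argument above is self-contained and is what I would write out.
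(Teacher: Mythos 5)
Your argument is correct: taking a unit minimizer $u^\ast$ for $\|(A+B)u\|_2$, applying the reverse triangle inequality, and bounding $\|Au^\ast\|_2 \ge \sigma_{\min}(A)$ and $\|Bu^\ast\|_2 \le \sigma_{\max}(B)$ is exactly the standard proof of this perturbation bound (and note the sign caveat is unnecessary, since $\|x+y\|_2 \ge \|x\|_2 - \|y\|_2$ holds regardless of sign). The paper itself gives no proof of Lemma~\ref{Lemma 1} --- it is simply imported from \cite{zhang2011matrix} --- so your self-contained two-line variational argument is a valid replacement for the citation and there is nothing in the paper to contrast it with.
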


\begin{proposition}  \label{Thm 4}
	If $\sigma_{min}(A-I)>2$ then the AVE (\ref{Equ AVE}) has exactly one solution for each b.
\end{proposition}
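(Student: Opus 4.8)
The plan is to reduce the condition on $A-I$ to a condition on $A+I$ and then invoke Proposition 2 (or rather the analogous "$\sigma_{\min}(A+I)>2$ implies unique solvability" result). The key observation is that the AVE $Ax-|x|=b$ is linked to a companion AVE by the substitution $x \mapsto -x$: if $x$ solves $Ax-|x|=b$, then setting $y=-x$ gives $-Ay-|y|=b$, i.e. $(-A)y-|y|=b$. Hence the AVE with matrix $A$ has a unique solution for every $b$ if and only if the AVE with matrix $-A$ does. Since $\sigma_{\min}(-A+I)=\sigma_{\min}(-(A-I))=\sigma_{\min}(A-I)$, the hypothesis $\sigma_{\min}(A-I)>2$ is exactly the hypothesis $\sigma_{\min}((-A)+I)>2$ for the matrix $-A$.

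So the steps, in order, are: (1) state and verify the sign-flip equivalence between the AVE for $A$ and the AVE for $-A$; (2) note $\sigma_{\min}((-A)+I)=\sigma_{\min}(A-I)>2$; (3) apply the result used in Proposition 2 — namely, $\sigma_{\min}(M+I)>2$ implies $Mx-|x|=b$ has exactly one solution for each $b$ \cite{wu2021unique} — with $M=-A$; (4) conclude that $(-A)y-|y|=b$ has a unique solution for each $b$, hence so does $Ax-|x|=b$. Alternatively, one can bypass the sign flip entirely and mimic the proof of Proposition 2 directly: by the identity $\sigma_{\min}(X)\,\sigma_{\max}(X^{-1})=1$, the hypothesis $\sigma_{\min}(A-I)>2$ is equivalent to $\sigma_{\max}((A-I)^{-1})<\tfrac12$, i.e. $2\|(A-I)^{-1}\|_2<1$, and then cite the companion unique-solvability criterion stated in terms of $A-I$ from \cite{wu2021unique}.

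I expect there is essentially no serious obstacle here; the only point requiring a little care is making sure the cited result from \cite{wu2021unique} is genuinely symmetric in the roles of $A+I$ and $A-I$ (equivalently, that it is invariant under $A\mapsto -A$), so that the sign-flip reduction is legitimate. If \cite{wu2021unique} only states the $A+I$ version, then step (1) is doing real work and should be written out; if it states both, the proof is a two-line citation. Given that Lemma 1 (the singular-value inequality $\sigma_{\min}(A)-\sigma_{\max}(B)\le\sigma_{\min}(A+B)$) has just been introduced, it is possible the authors instead intend a self-contained argument deriving unique solvability of the $A-I$ case from scratch via that inequality — but the cleanest route remains the reduction to Proposition 2 via $x\mapsto-x$, and I would present that.
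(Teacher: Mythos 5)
Your proof is correct, but it follows a genuinely different route from the paper's. The paper does not reduce to the $A+I$ case at all: it invokes the sufficient condition from \cite{wu2018unique} that the AVE has a unique solution whenever $A-I+2D$ is invertible for each $D\in[0,I]$, and then applies Lemma~\ref{Lemma 1} to get $\sigma_{min}(A-I+2D)\geq\sigma_{min}(A-I)-\sigma_{max}(2D)\geq\sigma_{min}(A-I)-2>0$, so every such matrix is invertible. Your sign-flip reduction --- $x\mapsto -x$ carries $Ax-|x|=b$ bijectively onto $(-A)y-|y|=b$, and $\sigma_{min}((-A)+I)=\sigma_{min}(-(A-I))=\sigma_{min}(A-I)>2$ --- is a clean and legitimate alternative: it makes this proposition a formal corollary of the $\sigma_{min}(M+I)>2$ criterion of \cite{wu2021unique} used in Proposition~\ref{Thm 3}, and it renders Lemma~\ref{Lemma 1} superfluous. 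What the paper's argument buys in exchange is independence from \cite{wu2021unique}: it rests only on the interval-invertibility condition of \cite{wu2018unique} and makes visible where the constant $2$ comes from (namely $\sigma_{max}(2D)\leq 2$). Your closing guess was right --- the authors do intend the self-contained perturbation argument via Lemma~\ref{Lemma 1}; either write-up is acceptable, though if you take your route you should state the sign-flip equivalence explicitly as step (1), since \cite{wu2021unique} is being applied to $-A$ rather than to $A$.
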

\begin{proof}
	If $A-I+2D$ is invertible for each $D \in [0, I]$ then the AVE (\ref{Equ AVE}) has exactly one solution \cite{wu2018unique}. Based on the Lemma \ref{Lemma 1}, we have $\sigma_{min}(A-I+2D) \geq \sigma_{min}(A-I) - \sigma_{max}(2D).$ Since $\sigma_{max}(2D) \leq 2,$ so if $\sigma_{min}(A-I) > 2$ then $\sigma_{min}(A-I+2D) > 0.$ This shows that matrix $A-I+2D$ is invertible. This completes the proof.
\end{proof}

\begin{proposition}  \label{Thm 5}
	If $2$$|| (A-I)^{-1} ||_{2} < 1$  then the AVE (\ref{Equ AVE}) has exactly one solution for each b.
\end{proposition}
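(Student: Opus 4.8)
The plan is to deduce Proposition~\ref{Thm 5} from Proposition~\ref{Thm 4} by exactly the singular-value identity that drives the proof of Proposition~\ref{Thm 3}. First I would note that the hypothesis $2\,\|(A-I)^{-1}\|_2 < 1$ presupposes that $A-I$ is nonsingular (otherwise $(A-I)^{-1}$ is not defined), and that it simply says $\sigma_{max}\big((A-I)^{-1}\big) = \|(A-I)^{-1}\|_2 < \frac12$.

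The second step is to apply the relation $\sigma_{min}(X)\cdot\sigma_{max}(X^{-1}) = 1$ with $X = A-I$, which converts $\sigma_{max}\big((A-I)^{-1}\big) < \frac12$ into $\sigma_{min}(A-I) > 2$. At that point Proposition~\ref{Thm 4} applies directly and gives that the AVE~(\ref{Equ AVE}) has exactly one solution for each $b$, which finishes the proof.

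I do not anticipate any genuine difficulty here: the argument is essentially a one-line dictionary entry translating $\|(A-I)^{-1}\|_2$ into $\sigma_{min}(A-I)$, followed by an appeal to a proposition already established. The only point to watch is the implicit invertibility of $A-I$, which is harmless since it is built into the statement. If a fully self-contained proof were preferred, one could instead bypass Proposition~\ref{Thm 4} and chain the invertibility criterion of \cite{wu2018unique} with Lemma~\ref{Lemma 1}, using $\sigma_{min}(A-I+2D) \geq \sigma_{min}(A-I) - \sigma_{max}(2D) \geq \sigma_{min}(A-I) - 2 > 0$ for every $D \in [0,I]$; but routing through Proposition~\ref{Thm 4} is the shorter route.
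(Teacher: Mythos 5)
Your proposal is correct and follows exactly the paper's own route: translate $2\,\|(A-I)^{-1}\|_2<1$ into $\sigma_{\min}(A-I)>2$ via the identity $\sigma_{\min}(X)\cdot\sigma_{\max}(X^{-1})=1$ and then invoke Proposition~\ref{Thm 4}. Your added remark about the implicit invertibility of $A-I$ is a sensible clarification but does not change the argument.
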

\begin{proof}	
	By relation, $\sigma_{min}(X).\sigma_{max}(X^{-1}) = 1,$  and Theorem \ref{Thm 4}, the condition $\sigma_{min}(A-I)>2$ can be written as $\sigma_{max}(A-I)^{-1} < \frac{1}{2}.$ This completes the proof.
\end{proof}


\paragraph{Acknowledgments.} 
The research work of Shubham Kumar was supported by the Ministry of Education, Government of India, through Graduate Aptitude Test in Engineering (GATE) fellowship registration No. MA19S43033021.



\bibliographystyle{abbrv}
\bibliography{ave_short_note}

\newcommand{\SortNoop}[1]{}
\begin{thebibliography}{1}

\bibitem{hladik2023properties}
M.~Hlad{\'\i}k.
\newblock Properties of the solution set of absolute value equations and the
  related matrix classes.
\newblock {\em SIAM Journal on Matrix Analysis and Applications},
  44(1):175--195, 2023.

\bibitem{hladik2023some}
M.~Hlad{\'\i}k and H.~Moosaei.
\newblock Some notes on the solvability conditions for absolute value
  equations.
\newblock {\em Optimization Letters}, 17(1):211--218, 2023.

\bibitem{mangasarian2007absolute}
O.~Mangasarian.
\newblock Absolute value programming.
\newblock {\em Computational optimization and applications}, 36:43--53, 2007.

\bibitem{mangasarian2006absolute}
O.~Mangasarian and R.~Meyer.
\newblock Absolute value equations.
\newblock {\em Linear Algebra and Its Applications}, 419(2-3):359--367, 2006.

\bibitem{rohn2004theorem}
J.~Rohn.
\newblock A theorem of the alternatives for the equation $Ax+ B| x|= b.$
\newblock {\em Linear and Multilinear Algebra}, 52(6):421--426, 2004.

\bibitem{wu2021unique}
S.~Wu and S.~Shen.
\newblock On the unique solution of the generalized absolute value equation.
\newblock {\em Optimization Letters}, 15:2017--2024, 2021.

\bibitem{wu2018unique}
S.-L. Wu and C.-X. Li.
\newblock The unique solution of the absolute value equations.
\newblock {\em Applied Mathematics Letters}, 76:195--200, 2018.

\bibitem{zhang2011matrix}
F.~Zhang.
\newblock Matrix theory: basic results and techniques.
\newblock 2nd edn. Springer, New York, 2011.

\end{thebibliography}

\end{document}